\documentclass[11pt, twoside]{amsart}

\usepackage{amssymb, amsmath, amsthm, mathrsfs,}
\usepackage{graphicx, epsfig, color,wrapfig}
\usepackage{tikz}
\usepackage{hyperref}
\hypersetup{
    colorlinks,%
    citecolor=black,%
    filecolor=black,%
    linkcolor=black,%
    urlcolor=blue
}

	\addtolength{\oddsidemargin}{-.5in}
	\addtolength{\evensidemargin}{-.5in}
	\addtolength{\textwidth}{1in}

	\addtolength{\topmargin}{-.25in}
	\addtolength{\textheight}{0.2in}


\usepackage{todonotes}

%
\usepackage{lipsum}
\usepackage{floatrow}
\usepackage[latin1]{inputenc}
\usepackage[all,cmtip]{xy}


\usepackage{chngpage}

\usepackage{booktabs}

\newcommand*{\DashedArrow}[1][]{\mathbin{\tikz [baseline=-0.25ex,-latex, dashed,#1] \draw [#1] (0pt,0.5ex) -- (1.3em,0.5ex);}}%


\theoremstyle{plain}
\newtheorem{teo}{Theorem}[section]

\newtheorem{lema}[teo]{Lemma}
\newtheorem{claim}[teo]{Claim}
\theoremstyle{remark}
\newtheorem{rem}{Remark}
\newtheorem{ej}[teo]{Example}
\theoremstyle{definition}
\newtheorem{mydef}[teo]{Definition}

\newcommand{\exedout}{
\begin{tikzpicture}[scale=0.8]
(8,0) rectangle +(5,5);
\draw[help lines] (8,0) grid +(5,5);
\draw[dashed] (8,0) -- +(5,5);
\coordinate (prev) at (8,0);
\draw [color=blue!60, line width=2] (8,0)--(9,0)--(10,0)--(10,1)--(13,1)--(13,3)--(13,5);
\draw [color=red!60, line width=1] (8,0)--(9,0)--(10,0)--(10,1)--(10,2)--(13,2)--(13,3)--(13,5);

\draw (10,1) node [scale=0.5, circle, draw,fill=blue!60]{};
\draw (13,2) node [scale=0.5, circle, draw,fill=red!60]{};
 \end{tikzpicture}
}

\newcommand{\exedouttwo}{
\begin{tikzpicture}[scale=0.8]
(0,0) rectangle +(5,5);
\draw[help lines] (0,0) grid +(5,5);
\draw[dashed] (0,0) -- +(5,5);
\coordinate (prev) at (0,0);

\draw [color=blue!60, line width=2.5] (0,0)--(1,0)--(1,1)--(3,1)--(3,3)--(4,3)--(5,3)--(5,5);
\draw [color=red!60, line width=1.5] (0,0)--(1,0)--(1,1)--(4,1)--(4,3)--(5,3)--(5,5);

\draw (3,1) node [scale=0.5, circle, draw,fill=blue!60]{};
\draw (4,3) node [scale=0.5, circle, draw,fill=red!60]{};

\end{tikzpicture}
}

\title[Invariants of the {B}rill-{N}oether curve]
{Invariants of the {B}rill-{N}oether curve}

\author[A. Castorena]{Abel Castorena}
\address{Centro de Ciencias Matem\'aticas, Unidad Morelia, Universidad Nacional Aut\'onoma de M\'exico, Apartado Postal 61-3 (Xangari), 58089 Morelia, Michoac\'an}
\email{abel@matmor.unam.mx}

\author[A. L\'opez]{Alberto L\'opez Mart\'in}
\author[M. Teixidor]{Montserrat Teixidor {\textrm i} Bigas} \address{Department of Mathematics, Tufts University, Bromfield-
Pearson Hall, 503 Boston Avenue, Medford, MA 02155}
 \email{alberto.lopez@tufts.edu}
 \email{montserrat.teixidoribigas@tufts.edu}

\subjclass[2010]{Primary 14H60 $\cdot$ Secondary 14H51, 05E15}

\thanks{The first author was supported by grants IN100211-2 (PAPIIT--UNAM) and 166158 (CONACyT)}
\date{\today}
\begin{document}

\begin{abstract}
For a projective nonsingular  curve of genus $g$, the Brill-Noether locus $W^r_d(C)$ parametrizes line 
bundles of degree $d$ over $C$ with at least $r+1$ sections. When the curve is generic and the Brill-Noether number $\rho(g,r,d)$ equals $1$, one can then talk of the \emph{Brill-Noether curve}. In this paper, we explore the first two invariants of this curve, giving a new way of calculating the genus of this curve and computing its gonality when $C$ has genus 5.
\end{abstract}

\maketitle
\begin{section}*{Introduction}
Let $C$ be a projective nonsingular  curve of genus $g$ defined over an algebraically closed field
and $\mbox{Pic}^d(C)$  the Picard variety that parametrizes isomorphism classes of degree $d$ 
line bundles on $C$. Let us denote by  $W^r_d(C)$  the subvariety of $\mbox{Pic}^d(C)$ that parametrizes line 
bundles  of degree $d$ over $C$ with at least $r+1$ linearly independent sections. The expected dimension of $W^r_d(C)$ is given 
by the Brill-Noether number $$\rho(g,r,d)=g-(r+1)(g-d+r).$$ 
Over an open set in the moduli space of curves, the expected dimension of $W^r_d(C)$ is actually its dimension.
 In particular, when $\rho(g,r,d)=1$ and $C$ is generic, the Brill-Noether locus $W^r_d(C)$ is a curve we will call the \emph{Brill-Noether curve}. One can then define the rational map 
$$\phi:\overline{M}_g\DashedArrow[->,densely dashed] \overline{M}_{g'}$$  by $\phi([C])=[W_d^r(C)]$, where $g'$ is the genus $g_{W^r_d}$ of the Brill-Noether curve. Pirola \cite{Pir85} and
  Eisenbud-Harris \cite{EH87}, using the determinantal adjunction formula of Harris-Tu 
\cite{HT84}, calculated this genus as 
 \begin{equation}g_{W^r_d}=1+\frac{g-d+r}{g-d+2r+1}\prod_{i=0}^r\frac{i!}{(g-d+r+i)!}\cdot g!\end{equation}

While this gives a collection of rational maps between moduli spaces of curves, their images are mostly unknown. 
Farkas \cite{Fa10} and Ortega \cite{Ort13} have recently worked on several questions around or using this map and posed questions about the
 gonality of the image curve under the map $\phi$. 

In this note, we provide a new method for computing the genus of the Brill-Noether curve when $r=1$. We also find its gonality in the first interesting case,
namely when $g=5$ and $r=1$. 
Our techniques provide  a different approach to the computation of  the Castelnuovo numbers,
 that is, the number of linear series of fixed degree and dimension when this number is finite (see Theorem \ref{castelnuovo}).

Our results are obtained by degenerating the given curve to a chain of elliptic components and describing explicitly the Brill-Noether curve in this situation. 
Our method could in principle, be extended to values $r>1$ and, in the case of the gonality, to values of $g$ other than 5.
 In both situations however, the combinatorics are more involved.
 
It is worth mentioning that the combinatorics arising in the description of the limit linear series over the class of curves we consider is very similar to the one that appears in the tropical proof of the Brill-Noether theorem in  \cite{CDPR}. There, the authors considered \emph{chains of loops} with generic edge lengths to prove a tropical analogue of the well-known Brill-Noether theorem. We will discuss the parallels between chains of elliptic curves and tropical curves, as well as a generalization of our results to a tropical setting, in a forthcoming work.

\end{section}
\section{Degeneration methods and the genus of the {B}rill-{N}oether curve}\label{genus}

Degeneration methods on the moduli space of stable curves have been used by many authors. Eisenbud and Harris \cite{EH86} introduced limit linear series on reducible curves of compact type, a technique that allowed the use of reducible curves in the solution of several classical and novel problems. An account on this method and applications can be found in \cite{HM98}. See also \cite{Toh}.

Recently, Osserman \cite{Oss} generalized Eisenbud and Harris' work, constructing a proper moduli space of Eisenbud-Harris limit linear series for families of curves. Hence, in order to compute the genus and gonality of the one-dimensional Brill-Noether locus on a general curve, we will consider a family of curves over the spectrum of a discrete valuation ring such that the generic fiber is a generic nonsingular  curve  $C$ of genus $g$, while the central fiber $C_0$ is a generic chain as in 

 \begin{mydef}\label{chain} Let $C_1,\dots,C_g$ be elliptic curves each with points $P_i$, $Q_i$. Glue $Q_i$ to $P_{i+1},i = 1,\dots,g-1$. The genus of the resulting curve is $g$; we shall call such a curve a \emph{chain of elliptic curves} (of genus $g$). The chain is said to be general when the choice of pairs of points on the elliptic components is general.
\end{mydef}

In this first section, we will provide a new proof of the expression computing the genus of a Brill-Noether curve. Using degeneration techniques, it suffices to compute the genus of the space of limit linear series on a chain of elliptic curves.

\begin{teo}\label{genus} Let $a\in\mathbb{Z}_{\geq 1}$ and $C$ be a generic curve of genus $g=2a+1$. Then the genus of the Brill-Noether curve $W^1_{a+2}(C)$ is $$g_{W^1_{a+2}}=1+\frac{2a(2a+1)}{a+2}\cdot\mathfrak c_a,$$
where $\mathfrak c_a$ is the $a$-th Catalan number $$\mathfrak c_a=\frac{1}{a+1}{2a\choose a}.$$

\end{teo}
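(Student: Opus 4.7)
The plan is to execute the degeneration strategy outlined in the opening of Section~1. Specialize $C$ to a generic chain $C_0 = C_1 \cup \cdots \cup C_g$ of $g = 2a+1$ elliptic components, as in Definition~\ref{chain}. By Osserman's proper moduli of Eisenbud-Harris limit linear series, the Brill-Noether curve $W^1_{a+2}(C)$ is the generic fiber of a flat family over the base DVR whose central fiber $\mathcal{W}_0$ is the moduli of limit $g^1_{a+2}$'s on $C_0$. Arithmetic genus being constant in such flat families of curves, $g_{W^1_{a+2}} = p_a(\mathcal{W}_0)$, and the theorem reduces to a combinatorial computation of this arithmetic genus.

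To describe $\mathcal{W}_0$, recall that a limit $g^1_{a+2}$ on $C_0$ is specified by a line bundle $L_i \in \mathrm{Pic}^{a+2}(C_i)$ and a pencil $V_i \subset H^0(L_i)$ on each component, together with compatible vanishing sequences $(\alpha_i, \beta_i)$ at $P_i$ and $(\alpha'_i, \beta'_i)$ at $Q_i$. The refined compatibility at the node $Q_i = P_{i+1}$ is $(\alpha_{i+1}, \beta_{i+1}) = (d - \beta'_i, d - \alpha'_i)$, and the boundary conditions at the free points $P_1, Q_g$ force $(\alpha_1, \beta_1) = (\alpha'_g, \beta'_g) = (0, 1)$. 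Applying Eisenbud-Harris additivity $\sum_i \rho(C_i) = \rho = 1$, for a generic chain exactly one active component $C_k$ satisfies $\rho(C_k) = 1$ and the data on the remaining components is rigid. The resulting one-dimensional family on $C_k$ is generically an elliptic curve $\mathrm{Pic}^{a+2}(C_k) \cong C_k$ (with $L_k$ free and $V_k$ determined), but in exceptional types where a boundary equality such as $\beta'_k = d$ or $\beta_k = d$ pins down $L_k$ as $\mathcal{O}(d Q_k)$ or $\mathcal{O}(d P_k)$, the pencil $V_k$ instead varies in a projective line.

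Next I would enumerate the combinatorial types via lattice paths. The recursion $\alpha'_i + \beta'_i = \alpha'_{i-1} + \beta'_{i-1} - 1 - \rho(C_i)$, together with the boundary values and the allowed range $0 \leq \alpha < \beta \leq d = a+2$, identifies types with certain lattice paths in a rectangle of side $a+1$, in bijection with pairs (Dyck path of semilength $a+1$, marked step). A careful case analysis would distinguish elliptic components from rational components and would identify the nodes of $\mathcal{W}_0$: these are limit linear series where the vanishing sequence at a node jumps to strictly higher weight, and which therefore lie in the closure of two distinct refined combinatorial types.

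Finally one computes
\[
p_a(\mathcal{W}_0) = 1 - \chi(\mathcal{O}_{\mathcal{W}_0}) = 1 + N - M_R,
\]
where $N$ is the node count and $M_R$ the number of rational components, and verifies using
\[
a\, \mathfrak{c}_{a+1} = \frac{2a(2a+1)}{(a+1)(a+2)}\binom{2a}{a} = \frac{2a(2a+1)}{a+2}\, \mathfrak{c}_a
\]
that this equals the claimed $1 + \frac{2a(2a+1)}{a+2}\mathfrak{c}_a$. The principal obstacle is precisely the enumeration step: correctly classifying all types with $\rho(C_k) = 1$, separating the elliptic and rational components, and identifying all node incidences in $\mathcal{W}_0$ through specialization of the vanishing data, so that the resulting tally matches the Catalan-type expression above.
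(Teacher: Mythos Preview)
Your degeneration strategy matches the paper's approach exactly, and the final arithmetic-genus formula you aim for is the one the paper uses. However, two points in your analysis diverge from what actually happens on the chain $C_0$, and the first of them is a genuine error.

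\textbf{There are no rational components.} On the active component $C_k$ (the one with $\rho(C_k)=1$), the additivity forces the vanishing data to satisfy $u_1^k+v_1^k=u_2^k+v_2^k=d-1$ exactly. In particular every vanishing order at $P_k$ and $Q_k$ lies in $[0,d-1]$, so the ``boundary equality $\beta_k=d$'' that you invoke to pin down $L_k$ never occurs. For a general $L_k\in\mathrm{Pic}^d(C_k)$ the twist $L_k(-u_j^kP_k-v_j^kQ_k)$ has degree $1$ on an elliptic curve, hence $h^0=1$, and the pencil is uniquely determined. Thus every irreducible component of $\mathcal W_0$ is isomorphic to some $C_k$, an elliptic curve, and your term $M_R$ vanishes. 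The paper establishes this and obtains $\nu=g\,\mathfrak c_a=(2a+1)\mathfrak c_a$ elliptic components, indexed by a Dyck path of semilength $a$ together with a choice of one of the $g=2a+1$ vertices; this is not the ``Dyck path of semilength $a+1$ with a marked step'' bijection you propose.

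\textbf{The enumeration you defer is the entire content.} With $M_R=0$ the formula becomes $p_a(\mathcal W_0)=1+\delta$, so everything rests on counting nodes. The paper does this in two lemmas: first it identifies exactly when two components meet (either adjacent marked vertices on the same Dyck path, or two paths that differ by sliding a single step across a block of equal steps between the two marked vertices), obtaining
\[
\delta \;=\; (g-1)\mathfrak c_a \;+\; 2\Bigl((a-1)\mathfrak c_a-\sum_{k=1}^{a-1}\mathfrak c_k\mathfrak c_{a-k}\Bigr) \;=\; 2\bigl((2a+1)\mathfrak c_a-\mathfrak c_{a+1}\bigr),
\]
and second it checks that on any fixed component the at most four intersection points with other components are pairwise distinct (so $\mathcal W_0$ is nodal). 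Both steps require the explicit description of how vanishing orders propagate across the chain; your outline does not yet contain the mechanism (the dichotomy $u^{j+1}_{\alpha_j}=u^j_{\alpha_j}$, $u^{j+1}_{3-\alpha_j}=u^j_{3-\alpha_j}+1$) that makes this tractable.
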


Using degenerations, it will suffice to compute the genus of the scheme of limit linear series on a chain of elliptic curves.

\begin{lema}\label{comps}
Let $C_0$ be a general chain of elliptic curves as above. 
 Then, the curve of limit linear series $W^1_{a+2}(C_0)$ is a reducible
  curve with $\nu=g\mathfrak c_a$ irreducible components each isomorphic to one 
  components of  $C_0$.
\end{lema}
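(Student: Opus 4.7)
The plan is to classify refined limit linear series on $C_0$ by the discrete data of their vanishing sequences at the nodes, and then isolate the types that produce one-dimensional components of $W^1_{a+2}(C_0)$.

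A refined limit $g^1_{a+2}$ on $C_0$ consists of pairs $(L_i, V_i)$ on each elliptic $C_i$, with vanishing sequence $(a_0^i, a_1^i)$ at $P_i$ and $(b_0^i, b_1^i)$ at $Q_i$, satisfying the refined gluing $b_j^i + a_{1-j}^{i+1} = a+2$ at each interior node. First I would compute the adjusted Brill-Noether number $\rho_i$ on each component and observe, via a telescoping sum using the gluing, that $\sum_i \rho_i = \rho(g,1,a+2) = 1$. Since each $\rho_i \geq 0$, exactly one moving component $C_k$ satisfies $\rho_k = 1$ with all others $\rho_i = 0$, and on each $C_i$ with $i \neq k$ the pair $(L_i, V_i)$ is uniquely determined by the combinatorial data.

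Next I would analyze the moving component $C_k$. The equation $a_0^k + a_1^k + b_0^k + b_1^k = 2(a+2) - 2$, combined with the Clifford-type bounds $a_j + b_{1-j} \leq a+2$, splits into two kinds of solutions. In the \emph{elliptic} case $a_1^k + b_0^k = a_0^k + b_1^k = a+1$, one finds $L_k \cong \mathcal{O}(a_1^k P_k + b_0^k Q_k + R)$ with $R$ a free point on $C_k$, producing a one-parameter family of line bundles identified (via Abel-Jacobi) with $C_k$ itself. In the \emph{rational} case $a_1^k + b_0^k = a+2$ (or the symmetric equality), $L_k$ is pinned to a single class and the apparent one-parameter variation of $V_k$ in $\mathbb{P}^1$ is cut down to a single (smoothable) point by the linking data at the nodes. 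The irreducible components of $W^1_{a+2}(C_0)$ are therefore in bijection with the elliptic configurations, each isomorphic to its moving $C_k$.

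Finally I would count the elliptic configurations. Setting $c_j := a_1^j - a_0^j \geq 1$, the gluing translates non-moving transitions into $c_{j+1} - c_j = \pm 1$ and an interior elliptic moving transition into $c_{j+1} = c_j$; the elliptic endpoint condition at $C_1$ or $C_g$ forces $c_2 = 1$ or $c_g = 1$ respectively, while otherwise $c_2 = c_g = 2$. Admissible $c$-sequences are thus $\pm 1$-walks between these boundary values that remain strictly positive, and a standard reflection/ballot argument gives exactly $\mathfrak{c}_a$ walks for each of the $g$ choices of the moving index $k$, summing to $\nu = g\,\mathfrak{c}_a$. The main obstacle is this combinatorial bookkeeping: aligning the boundary conditions and the range constraints on $c_j$ imposed by the gluing so that the Catalan count emerges uniformly, regardless of whether the moving component is interior or an endpoint.
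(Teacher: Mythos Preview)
Your outline is essentially the paper's argument, recast in the language of adjusted Brill--Noether numbers rather than the paper's $\epsilon_i$ (which records whether the \emph{line bundle} on $C_i$ is pinned by the vanishing data). The paper encodes the discrete data as a Dyck word of length $2a$ together with a marked position for the free component, while you use the walk $c_j=a_1^j-a_0^j$ with one level step; these encodings are in obvious bijection, and your reflection/ballot count does give $\mathfrak c_a$ configurations for each choice of moving index $k$.

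The one place your write-up is shakier than the paper's is the ``rational'' case $a_1^k+b_0^k=d$, $a_0^k+b_1^k=d-2$. Here $L_k$ is pinned, but $V_k$ genuinely moves in a $\mathbb P^1$ inside the limit linear series scheme $G^1_d(C_0)$: the second basis section lies in $H^0\bigl(L_k(-a_1^kP_k-b_0^kQ_k)\bigr)$, which has degree~$2$ and hence $h^0=2$, and the refined gluing at the nodes imposes no further constraint on which section you pick. So your claim that ``the linking data cuts it down to a single point'' is not correct as stated. The right reason this case does not contribute a component to $W^1_{a+2}(C_0)$ is that the lemma concerns the image in $\mathrm{Jac}(C_0)\cong\prod_i C_i$, where only the tuple $(L_1,\dots,L_g)$ is remembered: since $L_k$ is fixed, the whole $\mathbb P^1$ collapses to a single point---and one checks that this point already sits on one of the elliptic components you have enumerated. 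The paper's bookkeeping avoids this detour entirely, because its $\epsilon_i$ asks precisely whether $L_i$ is free; with that invariant, the only way to realize $\sum\epsilon_i=1$ is your ``elliptic'' splitting $a_1^k+b_0^k=a_0^k+b_1^k=d-1$.
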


\begin{proof}
    A limit linear series of degree $d$ and (projective) dimension one 
is given by a line bundle of  degree $d$ on each component
 along with a two-dimensional subspace of sections of this line bundle so that, at the nodes, corresponding 
 sections vanish with order adding at least to $d$. Denote by $u^i_1<u^i_2$ the order of vanishing of the  sections
 in this subspace at the point $P_i$ and by  $v^i_1>v^i_2$ the order of vanishing of the  sections
 in this subspace at the point $Q_i$. This means that all the sections on the subspace vanish to order 
 at least $u^i_1$ at $P_i$ and $v^i_2$ at $Q_i$ while there is at least one section $s_i$ vanishing to order 
 $u^i_2$ at $P_i$ and at least one section $s'_i$ vanishing to order at least $v^i_1$ at  $Q_i$. In particular,
 $s_i $ vanishes to order 
 $u^i_2$ at $P_i$ and $v^i_2$ at $Q_i$. As the total order of vanishing of a section of a line bundle
  cannot be larger
  than the degree of the line bundle, $u^i_2+v^i_2\le d$ and equality implies that the line bundle
   on the curve $C_i$ is ${\mathcal O}(u^i_2 P_i +v^i_2Q_i)={\mathcal O}(u^i_2 P_i +(d-u^i_2)Q_i)$. Similarly, 
   $u^i_1+v^i_1\le d$ and equality implies that the line bundle
   on the curve $C_i$ is ${\mathcal O}(u^i_1 P_i +(d-u^i_1)Q_i)$. Moreover, as $P_i$ and $Q_i$ are generic points and 
$u^i_1\not= u^i_2$, 
   ${\mathcal O}(u^i_1 P_i+ (d-u^i_1)Q_i)$ is not isomorphic to $ {\mathcal O}(u^i_2 P_i +(d-u^i_2)Q_i)$. 
   This means that only one of the above equalities can hold. 
   Therefore, $u^i_1+v^i_1+u^i_2+v^i_2\le 2d -1$ with equality if and only if 
   either the line bundle on the curve $C_i$ is 
   ${\mathcal O}(u^i_1 P_i +(d-u^i_1)Q_i)$ or $ {\mathcal O}(u^i_2 P_i +(d-u^i_2)Q_i)$. That is 
   $u_1^i+v_1^i+u_2^i+v_2^i\le 2d -1-\epsilon _i$ where $\epsilon _i=0$ if the line bundle on the
   $i^{th}$ component is of the form given above and is 1 otherwise. Therefore, 
   $$\sum _i (u^i_1+v^i_1+u^i_2+v^i_2)\le (2d -1)g-\sum \epsilon _i.$$

Let us now consider the vanishing at the nodes. By definition of limit linear series,
    $u^{i+1}_1+v^i_1\ge d, u^{i+1}_2+v^i_2\ge d$ for each node $i=1,\cdots, g-1$, while $u^1_1+u^1_2\ge 1, 
    v^g_1+v^g_2\ge 1$. Therefore, 
    $$\sum _i (u^i_1+v^i_1+u^i_2+v^i_2)\ge 2(g-1)d+2.$$
    
From the two inequalities, $$\sum_i\epsilon _i\le 2d-g-2.$$

\begin{rem} Here we have just been proving the Brill-Noether Theorem for our curve when $r=1$. The same proof works 
    for any other value of $r$ greater than one. The proof does not require characteristic zero.
     Moreover, a simplified proof of Petri is also possible in the same manner and in all 
     characteristics using these curves (see \cite{W} and \cite{CLT}).
     \end{rem}
     
     As in our case $\rho(g,r,d)=2d-g-2=1$, we find that 
     for a linear series,  the line bundle on each component but one is predetermined. This shows that the Brill-Noether curve 
is reducible. A component 
     corresponds to the choice of one of the components of the original curve  in which 
     the line bundle is free to vary and the choice,  on the remaining  $g-1$  components of 
     the original curve, of one of the two line bundles: $ {\mathcal O}(u^i_1 P_i +(d-u^i_1)Q_i)$ or  ${\mathcal O}(u^i_2 P_i +(d-u^i_2)Q_i)$. 
     We will distinguish each of these 
possibilities by the subindex of the order of vanishing that appears in the presentation. 
That way, the limit linear series is determined by choosing one index (either 1 or 2) on each of $g-1$ 
 components, together with an arbitrary line bundle of degree $d$ on the remaining component.
      As the set of line bundles on an elliptic curve is isomorphic to the curve itself,
       this component of the Brill-Noether curve 
      is isomorphic to the component of the original curve on which the line bundle is arbitrary.
      
      Let us see now which choices of subindices (1 or 2) are allowable. 
       If on one component $j$
      we choose the first subindex, that is, the line bundle is $ {\mathcal O}(u^j_1 P_j +(d-u^j_1)Q_j)$, then  $v^j_1=d-u^j_1$, while 
$v^j_2=d-u^j_2-1$.
      Therefore, $u^{j+1}_1=d-v^j_1=u^j_1$, while $u^{j+1}_2=d-v^j_2=u^j_2+1$. Similarly, if we choose instead the second 
subindex for the line 
      bundles on the component $j$, we will have $u^{j+1}_1=u^j_1+1$, while $u^{j+1}_2=u^j_2$. 
      
      It follows from the above discussion (using that   $u^1_1=0, u^1_2=1$) that 
      $u^i_1$ is the number of times we did not choose the line bundle to be 
       $ {\mathcal O}(u^j_1 P_j +(d-u^j_1)Q_j)$ on the components $j<i$, while $u^i_2$
       is one more than the number of times we did not choose the line bundle to be 
       $ {\mathcal O}(u^j_2 P_j +(d-u^j_2)Q_j)$ on the components $j<i$. This number of ``non-chosen" components
       includes the case in which we choose the line bundles on one of these components to be general. 
       As $u^i_1<u^i_2$ and  $u^1_1=0, u^1_2=1$, this means that, on any subcurve starting at $C_1$, we cannot choose the line 
bundle to be 
       $ {\mathcal O}(u^j_2 P_j +(d-u^j_2)Q_j)$ more times than we choose it to be  $ {\mathcal O}(u^j_1 P_j +(d-u^j_1)Q_j)$.
       
This problem can be easily formulated in combinatorial terms using the subindices in the expressions of the line bundles: we 
have to choose occurrences of the numbers 1 and 2 on a list/array of length $2a$ so that both 1 and 2 appear $a$ times each, and 
the number of occurrences of 2 up to and including any position on the list is always less than or equal to the number of 
occurrences of 1 up to that same position. We will call an array satisfying these conditions \emph{admissible}.
       
We are interested in counting how many of these choices for the subindices (line bundles) we have. The solution to this 
problem is the Catalan number $\mathfrak c_a$ (see \cite[Prop. 6.2.1]{Sta99}).
 \end{proof}

Note that we can describe the components of the Brill-Noether curve
in terms of  paths in a 2-dimensional lattice from $(0,0)$ to $(a,a)$ never rising above the diagonal 
with steps $(0,1)$ and $(1,0)$ (that we will identify with subindices 1 and 2 in 
Lemma \ref{comps}, respectively) along with a marked vertex\footnote{These paths, without the marked point, are classically known as \emph{Dyck paths} (cf. \cite[Cor. 6.2]{Sta99}).}. The vertex will represent the $i$th component of the original curve where the bundle is generic. See Figure \ref{fig:M1}.

 \begin{figure}
\centering
\begin{tikzpicture}[scale=0.8]
(0,0) rectangle +(5,5);
\draw[help lines] (0,0) grid +(5,5);
\draw[dashed] (0,0) -- +(5,5);
\coordinate (prev) at (0,0);
\draw [color=blue!60, line width=2] (0,0)--(1,0)--(1,1)--(2,1)--(2,2)--(4,2)--(4,3)--(5,3)--(5,5);12121L12122
\draw [color=red!60, line width=1] (0,0)--(1,0)--(1,1)--(2,1)--(2,2)--(4,2)--(4,3)--(5,3)--(5,5);  121211L2122
\draw (3,2) node [scale=0.5, circle, draw,fill=blue!60]{};
\draw (4,2) node [scale=0.5, circle, draw, fill=red!60]{};
 \end{tikzpicture}
\caption{} \label{fig:M1}
\end{figure}

The arguments above provide a new way to compute the classical Castelnuovo numbers (cf. \cite[VII, Theorem 4.4]{ACGH}).

\begin{teo}\label{castelnuovo} The number of $g^1_{a+1}$ on a curve of genus $2a$ is given by the Catalan number $$\mathfrak c_a=\frac{1}{a+1}{2a\choose a}.$$

More generally, the number of $g^r_{r(a+1)}$ on a curve of genus $a(r+1)$ is given by the generalized Catalan number 
$$\mathfrak C_{a,r+1}=(a(r+1))!\prod_{i=0}^{r}\frac{i!}{(a+i)!}.$$ 
 \end{teo}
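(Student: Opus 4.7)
The plan is to carry out the same degeneration as in Lemma \ref{comps}, but now in the Brill-Noether number zero regime. For the first assertion I would take $g=2a$ and $d=a+1$, so that $\rho(g,1,d)=2d-g-2=0$. Repeating verbatim the vanishing-sequence analysis of Lemma \ref{comps}, the inequality $\sum_i\epsilon_i\le 2d-g-2$ collapses to $\epsilon_i=0$ for every component, forcing the line bundle on each $C_i$ to be one of the two prescribed rank-one bundles. A limit $g^1_{a+1}$ on the chain is then encoded by a string of $1$'s and $2$'s of length $2a$, subject to the same admissibility condition used in the lemma, so these strings are exactly Dyck paths. Counting them by $\mathfrak c_a$ and invoking smoothability for $\rho=0$ (so the limit count equals the count on the generic smooth curve) gives the first half of the theorem.

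For the generalization I would redo the entire argument with vanishing sequences $u^i_0<u^i_1<\cdots<u^i_r$ at $P_i$ and $v^i_0>v^i_1>\cdots>v^i_r$ at $Q_i$. The key ingredient is a telescoping upper bound $\sum_{i,j}(u^i_j+v^i_j)\le (r+1)d-\binom{r+1}{2}-\sum_i\epsilon_i$ together with the matching lower bound coming from node compatibility and the endpoint ramification $u^1_j=j$, $v^g_j=j$; their difference is precisely $\rho(g,r,d)$, which vanishes under the hypotheses $g=a(r+1)$, $d=r(a+1)$. Hence every $\epsilon_i$ equals zero, and on each $C_i$ the line bundle is forced to be $\mathcal O(u^i_j P_i+(d-u^i_j)Q_i)$ for a uniquely determined index $j\in\{0,1,\dots,r\}$.

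The count then reduces to a combinatorial problem: how many sequences of length $a(r+1)$ from the alphabet $\{0,1,\dots,r\}$ are there in which each symbol occurs exactly $a$ times and, at every prefix, the number of occurrences of $s$ is at least the number of occurrences of $s+1$ for every $s$? Tracking the update rule---a choice of index $j$ on the $i$-th component leaves $u^i_j$ fixed and increases $u^i_s$ by one for $s\ne j$---shows that these inequalities are exactly the admissibility constraints inherited from $u^i_0<\cdots<u^i_r\le d$. Such sequences are in bijection with standard Young tableaux of rectangular shape with $r+1$ rows and $a$ columns, and the hook-length formula delivers the claimed closed form $\mathfrak C_{a,r+1}=(a(r+1))!\prod_{i=0}^r i!/(a+i)!$.

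The main obstacle I anticipate is the sharp telescoping inequality for general $r$: it replaces the clean ``two-choice'' dichotomy of Lemma \ref{comps} with the more delicate statement that, on each component, at most one of the equalities $u^i_j+v^i_j=d$ can hold across all indices $j$, which must be proved using that $P_i$ and $Q_i$ are general points of the elliptic curve $C_i$. Once that is in place, the remainder is a direct extension of the $r=1$ argument plus a standard hook-length computation, and smoothability of each limit series on a general chain is ensured by Osserman's theory (\cite{Oss}).
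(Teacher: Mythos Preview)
Your approach is essentially the paper's: degenerate to a general chain of elliptic curves, use $\rho=0$ to force every $\epsilon_i=0$ so that the line bundle on each component is one of the $r+1$ prescribed ones, and count the resulting admissible index sequences. The paper closes by citing Zeilberger's many-candidate ballot formula \cite{Z}, whereas you identify the sequences with standard Young tableaux of rectangular shape $(r+1)\times a$ and apply the hook-length formula; these are equivalent, and yours is arguably the more self-contained justification. One slip to correct: the global upper bound should read $\sum_{i,j}(u^i_j+v^i_j)\le g\bigl((r+1)d-r\bigr)-\sum_i\epsilon_i$ (on each elliptic component at most one index $j$ can achieve $u^i_j+v^i_j=d$, the others give at most $d-1$), while the $\binom{r+1}{2}$ term belongs on the lower-bound side, coming from the endpoint vanishing $\sum_j u^1_j\ge\binom{r+1}{2}$ and $\sum_j v^g_j\ge\binom{r+1}{2}$.
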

\begin{proof}
 The same argument that we used to find the number of components of the Brill-Noether curve could be used to
 describe   $W^r_{r(a+1)}$ on a curve of genus $a(r+1)$. In this case the Brill-Noether locus is finite. On our reducible curve, its points
 correspond to the number of ways to order $a$ copies of each of  the numbers $1,\dots,r+1$ so that at each position in the corresponding array a larger number does not appear more times than a smaller number. The number of arrays of this form is given by the \emph{generalized Catalan numbers}. These numbers, that could be considered a generalization of classical Catalan numbers, arise as the solution to the many-candidate ballot problem in combinatorics (see \cite{Z}). 
\end{proof}

Let us now find the number of nodes of the Brill-Noether curve.

\begin{lema}\label{nodes}
 The  Brill-Noether curve $W^1_{a+2}\left(C_0\right)$ (where $C_0$ is as in Lemma \ref{comps}), is a nodal curve with 
 $$\delta= 2\left((2a+1)\mathfrak c_a-\mathfrak c_{a+1}\right)$$ 
 nodes.
\end{lema}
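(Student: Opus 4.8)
The plan is to read off $\delta$ as a local count on the normalization. By Lemma \ref{comps} the normalization of $W^1_{a+2}(C_0)$ is a disjoint union of $\nu=(2a+1)\mathfrak c_a$ elliptic curves, the component attached to a marked Dyck path being identified with the factor $C_i$ of $C_0$ on which the bundle varies: its points are the limit series whose restriction to $C_i$ is an arbitrary $L\in\operatorname{Pic}^{a+2}(C_i)\cong C_i$, the bundles on the other factors being the rigid ones prescribed by the admissible array. As every component is smooth, once we know that the curve is nodal (no point on three components, transverse branches) we will have $\delta=\tfrac12\sum_{\text{components}}(\#\text{points where it meets the rest})$. The first task is therefore to locate, on one fixed component, the finitely many $L$ at which the series also lies on another component.

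On the component with free factor $C_i$ write the pencil on $C_i$ as $\langle s_1,s_2\rangle$, where $s_k$ vanishes to orders $(u^i_k,v^i_k)$ at $(P_i,Q_i)$; here $u^i_k+v^i_k=a+1$, so each $s_k$ has a single residual base point $R_k\sim L-u^i_kP_i-v^i_kQ_i$. The series degenerates exactly when some $R_k$ meets $P_i$ or $Q_i$, i.e. for the four classes $L=\mathcal O\bigl((u^i_k+\epsilon)P_i+(a+2-u^i_k-\epsilon)Q_i\bigr)$ with $k\in\{1,2\}$, $\epsilon\in\{0,1\}$; at such an $L$ one section acquires extra vanishing at a node and the freedom is handed to $C_{i-1}$ or $C_{i+1}$. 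The key step is to decide which of these four degenerations actually produce a second branch. I expect this to reduce to an admissibility condition on the transferred array together with $1\le i\le g$: a forward (resp. backward) transfer slides the marked vertex of the Dyck path past an adjacent step, and is obstructed precisely when it would push the vertex off an end of the chain or across the diagonal.

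The hard part will be the local study at these special bundles. Because the central object is an Eisenbud--Harris (refined) limit linear series, two components can meet at a point where the aspects on \emph{several} adjacent factors simultaneously drop vanishing, and such incidences are invisible if one only demands that the rigid bundles literally agree; they must nevertheless be counted, and they are responsible for the genuine connectedness of the limit curve. Carrying this out through the refined moduli space of \cite{Oss} should simultaneously (i) show that each meeting point is an ordinary double point, (ii) rule out triple points, and (iii) identify the obstructed degenerations. The main obstacle is precisely this refined analysis: establishing transversality and catching the non-obvious nodes, rather than the ensuing arithmetic.

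Granting the enumeration, the rest is a Catalan computation. Each of the $\nu$ components contributes four candidate degenerations, so the number of (component, candidate) incidences is $4\nu$; the obstructed ones number $4\mathfrak c_{a+1}$, via the identity that $(2a+1)\mathfrak c_a-\mathfrak c_{a+1}$ equals the number of valleys among the Dyck paths of semilength $a+1$ (equivalently, $\sum(\text{peaks})=\nu$ while $\sum(\text{peaks}-1)=\nu-\mathfrak c_{a+1}$). Hence the node incidences total $4\nu-4\mathfrak c_{a+1}$ and
\[
\delta=\tfrac12\bigl(4\nu-4\mathfrak c_{a+1}\bigr)=2\bigl((2a+1)\mathfrak c_a-\mathfrak c_{a+1}\bigr).
\]
Finally, the Catalan identity $\mathfrak c_{a+1}=\tfrac{2(2a+1)}{a+2}\,\mathfrak c_a$ rewrites this as $\tfrac{2a(2a+1)}{a+2}\,\mathfrak c_a$, which matches $p_a(W^1_{a+2}(C_0))-1$ and thus furnishes a consistency check against the genus in Theorem \ref{genus} through $p_a=\delta+1$.
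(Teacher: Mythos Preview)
Your half-edge strategy is sound in outline and is, in fact, the viewpoint implicit in the paper's Lemma~\ref{nodal}(b): each component meets the rest in at most four points, so once nodality is known one has $2\delta=\sum_X(\#\text{intersection points on }X)$. The difficulty is that you assert the crucial enumeration rather than prove it. Two issues stand out.

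First, your description of the four ``transfers'' is not what actually occurs. Only two of the four neighbouring components arise by sliding the marked vertex past an adjacent step of the same Dyck path; the other two (the paper's types (3) and (4) in Lemma~\ref{nodal}) involve a jump of the vertex over a maximal block of equal entries \emph{together with} a modification of the path (one entry is moved across the block). The obstruction for these is not simply ``off an end or across the diagonal'' at the marked vertex: it is the admissibility of the modified sequence $\underline{\alpha}'$ or $\underline{\alpha}''$, and this has to be analysed. In particular, your expectation that the freedom ``is handed to $C_{i-1}$ or $C_{i+1}$'' only captures the first step; it may then propagate along a run of equal $\alpha_j$'s before landing on the genuine neighbouring component.

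Second, the count of obstructed candidates as $4\mathfrak c_{a+1}$ is the heart of the matter, and the justification you give is circular. The identity ``$(2a+1)\mathfrak c_a-\mathfrak c_{a+1}$ equals the number of valleys in Dyck paths of semilength $a+1$'' is true (it follows from $\#\text{valleys}=\#\text{peaks}-1$ together with the bijection between marked Dyck paths of semilength $a$ and peaks of semilength $a+1$), but it is just a restatement of the target formula $\delta=2(\nu-\mathfrak c_{a+1})$; it does not, by itself, explain why the obstructed transfers are in bijection with four copies of anything. A direct count of obstructions must separately handle the end cases $i=1$, $i=g$ (each contributing $2\mathfrak c_a$), the cases where no $s$ or no $t$ exists, and the cases where $\underline{\alpha}'$ or $\underline{\alpha}''$ fails admissibility---and this is essentially the work the paper carries out.

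For comparison, the paper's proof enumerates the nodes directly rather than the half-edges. It separates intersections into (a) same path, adjacent vertices $i,i+1$, contributing $(g-1)\mathfrak c_a$; and (b) different paths that agree before the first vertex and after the second and run ``parallel'' in between, contributing $2\bigl((a-1)\mathfrak c_a-\sum_{k=1}^{a-1}\mathfrak c_k\mathfrak c_{a-k}\bigr)$ via a lattice-path argument. The Catalan recursion $\mathfrak c_{a+1}=\sum_{k=0}^{a}\mathfrak c_k\mathfrak c_{a-k}$ then yields $\delta=2((2a+1)\mathfrak c_a-\mathfrak c_{a+1})$. Nodality (distinctness of the four points, transversality, no triple points) is deferred to Lemma~\ref{nodal}, exactly as you defer it; so on that front your sketch and the paper are on equal footing.
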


\begin{proof} Here we  count  the number of points of intersection of the components described above.
We want to find conditions for two of the components of the Brill-Noether curve to intersect, where a component of the 
degenerated Brill-Noether curve is an elliptic curve as described in \ref{comps}. 
 We will see in the next Lemma that there are no triple or worse points.
 Then in Lemma \ref{nodal}, we check  that the points of intersection on a given component are all different
  and that the difference of two points of intersection is a multiple of $P_i-Q_i$, where $P_i, Q_i$ are the nodes on the component $C_i$ of the original curve.

Let $X$  be a component of $W^1_{a+2}(C_0)$ corresponding, via the description in Lemma \ref{comps}, 
to the ordering of 1's and 2's given by the sequence 
$\underline{\alpha}=(\alpha_1,\dots,\alpha_{2a})$ satisfying the conditions mentioned there 
and with arbitrary line bundle 
on the $i$th component. Similarly, assume that a second component $X'$ corresponds to the sequence $
\underline{\alpha}'=(\alpha'_1,\dots,\alpha'_{2a})$, with arbitrary line bundle on the $i'$th component, 
with $i'>i$.

As the line bundle on each $C_j$ other than $C_i$ (resp. $C_{i'}$) is determined by the sequence $\underline{\alpha}$, it follows 
that \begin{equation}\label{paths}
\left.\begin{aligned}\alpha_1&=\alpha'_1,\dots, \alpha_{i-1}=\alpha '_{i-1}   \\
 \alpha_{i'}&=\alpha'_{i'},\dots,  \alpha_{2a}=\alpha'_{2a}.\end{aligned}
 \right.
 \qquad
      \end{equation} 
      
If $i'=i+1$, the sequence $\underline\alpha$ equals $\underline{\alpha}'$. Conversely, 
if  $\underline\alpha$ equals $\underline{\alpha}'$ and $i'=i+1$, the two components intersect:
 the line bundle in the first $i-1$ and the last $g-i-1$ components of the original curve is
 the same for both choices. The line bundles on the components $i$ and $i+1$ are free to vary for
  one of the choices and completely determined for the other. Hence, there exists a unique point 
  of intersection between these two components of the Brill-Noether curve. 
   That is, each component of the Brill-Noether curve 
corresponding to a choice of an ordering $\underline\alpha$ and a vertex $i, 1\le i\le g-1$ of the chain of elliptic
 curves intersects the 
component corresponding to the same choice of $\underline\alpha$ and vertex $i+1$.
 The number of these intersections is $(g-1)\mathfrak c_a$. 
 Alternatively, one could think of each choice of  sequence $\underline\alpha$ as giving rise to a component of genus $g$ of the Brill-Noether curve.
 
 Assume now that $i'\neq i+1$. As the line bundle on $C_i$ for a point in $X$ is generic, we have 
 $u^{i+1}_1=u^{i}_1+1$ and $u^{i+1}_2=u^{i}_2+1$. On the second component of the Brill-Noether curve $X'$, however, the index 
corresponding to $\alpha '_i$ does not increase. 
 Since the line bundle on the curve $C_{i+1}$ must be the same for one point in $X$ and one point in $X'$ (we are assuming the 
components intersect),
  this implies $\alpha' _{i+1}\not= \alpha' _{i}$.
  Recall now that the line bundle on the component $C_i$ is generic for the points of $X$. On the elliptic components of $X$ after $C_i$ the 
line bundles are again determined by $\underline\alpha$. (Note that the indices in $\underline\alpha$ are now off by one,  given that  the sequence $\underline\alpha$ carries no information for the elliptic component $C_i\subset X$.) We deduce then that 
$\alpha_i=\alpha' _{i+1}$. 
  As the discrepancy in vanishing between points in $X$ and points in $X'$ persists for the index $\alpha '_i\not=\alpha' _{i+1}=
\alpha_{i}$, we have 
  $$\alpha' _{i+1}=\alpha' _{i+2}=\cdots =\alpha' _{i'-1}=\alpha _{i}=\alpha _{i+1}=\cdots =\alpha_{i'-2}.$$ Then, necessarily, $
\alpha_{i'-1} =\alpha'_i$.
  
For example, if $\alpha _i=1$, this would say that \begin{align*}\alpha _{i}=\alpha _{i+1}=\cdots =\alpha_{i'-2}=1, \\ \alpha' _{i+1}=\alpha' _{i+2}=\cdots =\alpha' _{i'-1}=1,\end{align*} while 
  \begin{align*}\alpha '_i=2, \alpha_{i'-1}=2.\end{align*}

Intersections of two components $X$ and $X'$ of the Brill-Noether curve, in this case, correspond to lattice paths given by sequences $\underline\alpha$ and $\underline\alpha'$, respectively, whose entries satisfy (\ref{paths}), i.e. the paths are the same up to the first marked vertex and after the second marked vertex. After the first vertex, the paths become parallel, since $\alpha'_i=2$, and meet again on the second vertex, since $\alpha_{i'}=2$. 

\begin{ej}
The lattice path for the intersection of the components $X,X'\subset W_{7}^1(C_0)$ given by $\underline\alpha=\underline{\alpha}'=(1,2,1,2,1,1,2,1,2,2)$, with generic bundles for $i=6$ and $i'=7$, respectively, is shown in Figure \ref{fig:M1}.
\end{ej}

\begin{ej} An example of an intersection of lattice paths corresponding to the intersection of components of the Brill-Noether curve in this case is represented in Figure \ref{fig:M2}. There, the first component $X$ is given by the sequence $\underline\alpha=(1,1,2,1,1,1,2,2,2,2)$ and the marked vertex for $i=4$; the second intersecting component is given by $\underline\alpha'=(1,1,2,2,1,1,1,2,2,2)$ and $i'=8$.
\end{ej}
 
We are interested in counting how many of these intersections we have. For each pair (path, marked vertex) one can construct a lattice path corresponding to an intersection with the given pair, except when the marked vertex lies on the diagonal. The total number of these pairs is $(a-1)\mathfrak c_a$, regardless of where the vertex is. The number of paths when the marked vertex is on the diagonal is the product of the number of paths from $(0,0)$ to this vertex and the number of paths from the vertex to $(a,a)$, i.e. the product of Catalan numbers $\mathfrak c_k \mathfrak c_{a-k}$. The number of nodes on the curve in this case is therefore \begin{equation}\label{eq:cortes}(a-1)\mathfrak c_a-\sum_{k=1}^{a-1} \mathfrak c_k \mathfrak c_{a-k}.\end{equation}

\begin{figure}
\begin{floatrow}
\ffigbox{\caption{} \label{fig:M2}}{\exedout}
\ffigbox{\caption{}\label{fig:M3}} {\exedouttwo}
\end{floatrow}
\end{figure}

A similar argument is valid for $\alpha_i=2$, yielding again an expression equal to (\ref{eq:cortes}). Hence the formula for the number of nodes is

\begin{align*}
\delta&=2\left((a-1)\mathfrak c_a-\sum_{k=1}^{a-1} \mathfrak c_k \mathfrak c_{a-k}\right)+(g-1)\mathfrak c_a.
\end{align*}

From the well-known recursion of Catalan numbers $$\mathfrak c_{a+1}=\sum^{a}_{k=0}\mathfrak c_k \mathfrak c_{a-k},$$ and the fact that $\mathfrak c_0=1$, we obtain the expression $\delta= 2\left((2a+1)\mathfrak c_a-\mathfrak c_{a+1}\right)$ for
the number of nodes of the limit of the Brill-Noether curve, $W^1_{a+2}\left(C_0\right)$. \end{proof}

\begin{lema}\label{nodal} Let $C_0$ be  general  a chain of elliptic curves. Then, 

\begin{enumerate} 
\item[(a)]  The points at which a given component $X$ of the  Brill-Noether curve $W^1_{a+2}\left(C_0\right)$ intersects any other of the components of $W^1_{a+2}\left(C_0\right)$ are all different.
  
\item[(b)] A fixed  component $X$ has at most four points of intersection with other components.
  
\item[(c)] Assume that  $X$ is isomorphic to a component  $C_i$ of $C_0$ and $P_i, Q_i$ are the nodes of $C_i$.
   If $R, S$ are two points on $X$ that are points of intersection with other components of the Brill-Noether
   curve, then $R-S=k(P-Q)$ for some integer $k$.
\end{enumerate}
\end{lema}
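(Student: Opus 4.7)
The plan is to describe, for a fixed component $X = X(\underline\alpha, i)$ of $W^1_{a+2}(C_0)$, every component $X'$ that can intersect $X$ and to pin down the line bundle on $C_i$ that each such intersection imposes. Parts (a), (b) and (c) then all follow by direct comparison of these line bundles.

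For (b), I would revisit the case analysis in the proof of Lemma~\ref{nodes}. Any component $X' = X(\underline\alpha', i')$ with $i' > i$ meeting $X$ must satisfy either $i' = i+1$ and $\underline\alpha' = \underline\alpha$, or $i' > i+1$, in which case $i'-1$ is the first position past $i$ at which $\underline\alpha$ changes value away from $\alpha_i$, and $\underline\alpha'$ is obtained from $\underline\alpha$ by the unique transposition of the entries at positions $i$ and $i'-1$ dictated by the intersection conditions. Hence at most one non-adjacent $X'$ with $i' > i+1$ exists. By symmetry, at most one adjacent and one non-adjacent component meet $X$ from below, so at most four components meet $X$ altogether.

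For (c), the line bundle on $C_i$ at any such intersection is completely prescribed by $X'$: since $C_i$ is not the marked component of $X'$, the recursion of Lemma~\ref{comps} applied to $\underline\alpha'$ forces this line bundle to take the form $\mathcal O(u P_i + (d-u) Q_i)$ for an integer $u$ that depends on the subindex assigned to $C_i$ by $\underline\alpha'$ and on the position of the marked component of $X'$ relative to $C_i$. Under the identification $X \cong C_i$ coming from the Abel-Jacobi map based at $Q_i$, the corresponding point on $X$ is $u(P_i - Q_i) \in \mathrm{Pic}^0(C_i) \cong C_i$, so the difference of any two intersection points on $X$ is an integer multiple of $P_i - Q_i$. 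This is precisely (c).

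For (a), the four candidate values of $u$ can be read off from the running counts of $1$'s and $2$'s in the prefix $\alpha_1, \ldots, \alpha_{i-1}$, shifted by $+1$ in the two intersections for which the marked component of $X'$ lies before $C_i$ (accounting for the ``generic component'' increment in the recursion of Lemma~\ref{comps}), and with the subindex for $C_i$ flipped in the two non-adjacent cases. A direct case analysis on $\alpha_{i-1},\alpha_i \in \{1,2\}$, combined with the admissibility conditions on $\underline\alpha$ and on the transposed sequences $\underline\alpha'$, $\underline\alpha''$, shows that these four integers are pairwise distinct whenever the corresponding intersections coexist. Because the chain is general, $P_i - Q_i$ has infinite order in $\mathrm{Pic}^0(C_i)$, so distinct values of $u$ produce distinct line bundles, hence distinct points of $X$. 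The main obstacle is precisely this combinatorial bookkeeping in (a); everything else reduces to Lemma~\ref{comps} and the structural description of intersections recorded in Lemma~\ref{nodes}.
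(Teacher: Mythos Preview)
Your outline is correct and follows essentially the same route as the paper: enumerate the at most four components meeting $X$ (two adjacent, two obtained by the unique transposition on either side), read off the line bundle on $C_i$ in each case as $\mathcal{O}(uP_i+(d-u)Q_i)$, and then distinguish the four values of $u$ via admissibility. The paper's proof is precisely the ``direct case analysis'' you defer to at the end of (a): it records the four values as $u_{\alpha_{i-1}}+1,\ u_{\alpha_i},\ u_{\alpha_s},\ u_{\alpha_{t-1}}+1$ and uses the admissibility constraints on the transposed sequences to force $u_1+1<u_2$ in every potentially coinciding case.
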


\begin{proof} A component $X$ of the Brill-Noether curve of $C_0$ corresponds to the choice of a sequence 
$\underline{\alpha}=(\alpha_1,\dots,\alpha_{2a})$ of 1's and 2's satisfying the conditions in Lemma \ref{comps} and with arbitrary line bundle on the $i$th component. 
Assume we have 
$$\alpha_{t-1}\not=\alpha_{t}= \alpha_{t+1}=\cdots =\alpha_{i-1},$$
$$\alpha_{i}=\alpha_{i+1}=\cdots =\alpha_{s-1}\not=\alpha_{s}.$$
Then this component intersects the following components 
\begin{enumerate}
\item[(1)] The component corresponding to the same chain $\underline{\alpha}$ 
with arbitrary line bundle on the $(i-1)$th component (assuming that $i>1$).
\item[(2)] The component corresponding to the same chain $\underline{\alpha}$ 
with arbitrary line bundle on the $(i+1)$st components (assuming that $i<2a$).
\item[(3)]  The component corresponding
to the chain 
$$\underline{\alpha}'=(\alpha_1,\dots,\alpha_{i-1},\alpha_{s}, \alpha_{i},\alpha_{i+1},
\dots,\alpha_{s-1},\alpha_{s+1},\dots, \alpha_{2a})$$
with arbitrary line bundle on the component $s+1$ (assuming that the chain $\underline{\alpha}'$ is admissible). 
\item[(4)] The component corresponding to the chain 
$$\underline{\alpha}''=(\alpha_1,\dots,\alpha_{t-2}, \alpha_{t},\dots ,\alpha_{i-1},\alpha_{t-1}, \alpha_{i},\dots, \alpha_{2a})$$
with arbitrary line bundle on the component $t-1$ (assuming that the chain $\underline{\alpha}''$ is admissible).
\end{enumerate}
 
 Choose now another   component $X'$ of the Brill Noether curve corresponding to an admissible sequence $\hat{ \underline{\alpha} }$ and the choice of a component 
 $j\not= i$ on which the 
 line bundle is free to vary. Denote by $\hat u_1<\hat u_2$, the vanishing at $P_i$ of a linear series in $X'$. 
 Then, the restriction to $C_i$ of the line bundle coming from a linear series on $X'$ is  
 $${\mathcal O}({{\hat u_{\alpha_i}}}P_i+(d-{{\hat u_{\alpha_i}}})Q_i)\  {\rm if} \ j>i \ \ \mbox{and } {\mathcal O}({{\hat u_{\alpha_{i-1}}}}P_i+(d-{{\hat u_{\alpha_{i-1}}}})Q_i)\  {\rm if} \ j<i.$$
 It follows that the line bundle on the component $C_i$ in the cases listed above is 
\begin{enumerate}
\item[(1)] ${\mathcal O}((u_{\alpha_{i-1}}+1)P_i+(d-( u_{\alpha_{i-1}}+1))Q_i)\ $
 (assuming that $i>1$).
\item[(2)] ${\mathcal O}(u_{\alpha_{i}}P_i+(d- u_{\alpha_{i}})Q_i)  $
 (assuming that $i<2a$).
\item[(3)]  ${\mathcal O}(u_{\alpha_{s}}P_i+(d- u_{\alpha_{s}})Q_i)  $  if $\underline{\alpha}'$ is admissible where 
$$\underline{\alpha}'=(\alpha_1,\dots ,\alpha_{i-1},\alpha_{s}, \alpha_{i},\alpha_{i+1},
\dots, \alpha_{s-1},\alpha_{s+1},\dots, \alpha_{2a}).$$
    The condition for admissibility is automatically satisfied if $\alpha_s=1$, since 1 in an admissible sequence can always be pulled back to earlier entries. When $\alpha _s=2$,
 we need the number of ones in the partial sequence $(\alpha_1,\dots ,\alpha_{i-1})$  to be greater than the number of twos. Therefore, $u_i+1<u_2$. That is, 
 \bigskip
 \item[(*3)]\hspace{2.5cm} if $ \alpha _{s}=2, \  u_{\alpha _{i}}+1=u_1+1<u_2=u_{\alpha _{s}}$.
 \bigskip
 \item[(4)]  ${\mathcal O}((u_{\alpha_{t-1}}+1)P_i)+(d- u_{\alpha_{t-1}}-1)Q_i)  $
assuming that  the sequence 
$$\underline{\alpha}''=(\alpha_1,\dots,\alpha_{t-2}, \alpha_{t},\dots ,\alpha_{i-1},\alpha_{t-1}, \alpha_{i},\dots, \alpha_{2a})$$ is admissible.
 The condition for admissibility is automatically satisfied if $\alpha_{t-1}=2$ since 2 in an admissible sequence can always be pushed forward to later entries. When $\alpha _{t-1}=1$, 
 the sequence $(\alpha_{1}\dots ,\alpha_{t-2},\alpha_{t}\dots, \alpha_{i-1})$ needs to be the 
 beginning of an admissible sequence, and therefore satisfies that there are at least as many 1s as there are 2s. The vanishing at 
 $P_{i-1}$ after such list would be $u_1, u_2-1$. So when the condition in (4) is satisfied, $u_1<u_2-1$ if $\alpha _{t-1}=1$. That is, \bigskip
 \item[(*4)]\hspace{2.5cm}  if $\alpha _{t-1}=1, \  u_{\alpha _{t-1}}+1=u_1+1<u_2=u_{\alpha _{i-1}}$.
\end{enumerate} 
 \bigskip
 
 We now need to check that the four values 
 $$u_{\alpha_{i-1}}+1, u_{\alpha_{i}}, u_{\alpha_{s}}, u_{\alpha_{t-1}}+1$$
 are different whenever they appear. Note that possible subindices are either 1 or 2 and that, by definition, 
 $u_1<u_2$. We also have $\alpha _{i-1}\not= \alpha _{t-1}, \alpha _{i}\not= \alpha _{s}$. 
 The only overlaps that could happen are then if one of the entries $\alpha_{i-1}, \alpha_{t-1}$ 
 were 1 and one of $\alpha_{i}, \alpha_{s}$ were 2 and $u_2=u_1+1$.
  Conditions (*3) and (*4) exclude all of these options except for $\alpha_{i-1}=1, \alpha_{i}=2$
   and $u_2=u_1+1$. The condition $u_2=u_1+1$ tells us that in the sequence $(\alpha _1,\dots, \alpha_{i-1})$
   there are as many ones as there are twos. Therefore, we would not be able to include in this sequence the additional $\alpha_i=2$.
   
   This proves that all points of intersection of a given component of the Brill-Noether curve
    with any other component are different.
    
    The Brill-Noether curve lives in the Jacobian of the reducible curve, which is isomorphic to the product
     of elliptic curves $C_1\times\cdots\times  C_g$. Each component is the product of one fixed point on each of these $C_i$
     except for one in which it is the whole curve. So the intersections are transversal. Moreover,
     from the proof of the Gieseker-Petri theorem, all points other than points of intersection are non-singular. 
\end{proof}

\begin{proof}[Proof of Theorem \ref{genus}] On the special fiber, the Brill-Noether curve degenerates to a reducible curve with a certain number of irreducible components $\nu$. Let us denote by $C_i$ each of these components and let $g_{C_i}$ be their genera.

We have then (cf. \cite[2.14]{HM98}) \begin{align*}
g_{W^1_{a+2}}&=\sum_{i=1}^{\nu}g_{C_i}+\delta-\nu+1.\end{align*}

The result follows now from Lemmas \ref{comps} and \ref{nodes}.
\end{proof}

\section{Gonality of the Brill-Noether curve when $g=5$}\label{gonality}

The gonality $\mbox{gon}(C)$ of a smooth curve $C$ of genus $g$ is considered the second natural invariant of $C$, after its genus, and it is defined as
$$\mbox{gon}(C)=\{\deg f\mid f:C\rightarrow \mathbb P^1 \mbox{ is a surjective morphism}\}=\min\{d\in\mathbb Z_{>0}\mid C\mbox{ has a } g_d^1 \}.$$

The goal of this section is to show that, for a generic curve $C$ of genus 5, the Brill-Noether curve $W_4^1(C)$ has gonality 6. 

Let us first note that a generic curve $C$ of genus 5 is the complete intersection of three quadrics in $\mathbb{P}^4$. The singular quadrics containing the curve give a non-singular quintic in the ${\mathbb P}^2$ of all quadrics containing the curve.  The curve $W^1_4(C)$ is an unramified double cover of this plane quintic
(see \cite[p.207]{ACGH}  and \cite[Section III]{T}). From \cite[Section 3]{AF} an \'etale double cover of a curve of even genus has gonality less than the maximum. In our case, as $W^1_4(C)$ has  genus 11, this observation implies that $\mbox{gon}(W^1_4(C))\leq 6$. Our proof below will show that $W^1_4(C)$ is not 5-gonal and will therefore imply that $\mbox{gon}(W^1_4(C))$ is exactly 6.

Let us consider a family of curves so that the generic fiber is a generic smooth curve of genus 5 and whose central fiber is a chain of 5 elliptic curves as in Definition \ref{chain}. By a semicontinuity argument, it suffices to show that the Brill-Noether curve of the central fiber $C_0$ of this family is not 5-gonal. Note that in this case, the corresponding Brill-Noether curve $W_4^1(C_0)$ is reducible not of compact type. It suffices then to show (see proof of \cite[Theorem 5]{HM82}) that $W_4^1(C_0)$ does not admit an  admissible cover of degree 5.

\begin{mydef}[\cite{HM82,Oss05}] A degree $d$ map $f : C\rightarrow D$ of stable curves (with marked points) is defined to be an admissible cover of degree $d$ if:
\begin{enumerate}
\item[(i)] The set of nodes of $C$ are precisely the preimage under $f$ of the set of nodes of $D$.
\item[(ii)] The set of smooth ramification points of $C$ are the marked points of $C$.
\item[(iii)] Lift $f$ to $\tilde f : \tilde C \rightarrow \tilde D$ on the normalizations of $C$ and $D$. Then for each
node of $C$, the ramification indices of $\tilde f$ at the two points of $\tilde C$ lying above the node must coincide.
\end{enumerate}
\end{mydef}

Let $C$ be a chain of five elliptic curves. By the work in the previous section, the Brill-Noether curve $W_{4}^1(C)$ can be described in terms of 
the two admissible sequences $\underline\alpha'=(1,2,1,2)$ and $\underline\alpha''=(1,1,2,2)$ (see the proof of Lemma \ref{comps}).
  Each sequence will give rise to five components in the Brill-Noether curve isomorphic to each of the five components of the given curve $C$. 
  We summarize the possibilities in the following tables.

\begin{table}[h!]
\scalebox{0.8}{\begin{tabular}{c||c|c|c|c|c|}
 $g_4^1$& $C_1'$ &  \color{green!60!black} $C_2'$ & $C_3'$ &   \color{red!60!black} $C_4'$& $C_5'$ \\ \hline
 $$ & $L$ &  \color{green!60!black} $4Q$ & $4Q$ &   \color{red!60!black} $4Q$& $4Q$ \\ \hline
 1 & $P+3Q$ &  \color{green!60!black} $L$ & $2P+2Q$ &   \color{red!60!black} $2P+2Q$& $2P+2Q$  \\ \hline
 2 & $3P+Q$ &   \color{green!60!black}$3P+Q$ & $L$ &   \color{red!60!black} $P+3Q$& $P+3Q$  \\ \hline
 1 & $2P+2Q$ &  \color{green!60!black} $2P+2Q$ & $2P+2Q$ &   \color{red!60!black} $L$& $3P+Q$  \\ \hline
 2 & $4P$ &  \color{green!60!black} $4P$ & $4P$ &   \color{red!60!black} $4P$& $L$  \\ \end{tabular}
\ \ \ \ \ 
\begin{tabular}{c||c|c|c|c|c|}
 $g_4^1$&    $C_1''$ & \color{red!60!black} $C_2''$ & $C_3''$ &  \color{green!60!black}$C_4''$& $C_5''$ \\ \hline
  $$           &           $L$      &      \color{red!60!black}  $4Q$       &       $4Q$        &        \color{green!60!black} $4Q$      &        $4Q$        \\ \hline
 1               &        $P+3Q$  &       \color{red!60!black}    $L$         &       $4Q$       &     \color{green!60!black}    $4Q$      &     $4Q$       \\ \hline
 1 		&	 $P+3Q$ & \color{red!60!black} $P+3Q$ &	 $L$ & \color{green!60!black}$3P+Q$& $3P+Q$ \\ \hline
 2 & $4P$ & \color{red!60!black} $4P$ & $4P$ & \color{green!60!black}$L$& $3P+Q$ \\ \hline
 2 & $4P$ &  \color{red!60!black}$4P$ & $4P$ & \color{green!60!black}$4P$& $L$  \\ 
\end{tabular}}
\end{table}

The column in the far left indicates the combinatorial type mentioned above. 
Each of the remaining entries columns corresponds to one of the five components of the Brill-Noether curve. 
An entry $aP+bQ$  on the $i^{th}$ row means that the limit linear series on $C$ on the component $i$  corresponds to the line bundle $\mathcal {O}(aP_i+bQ_i)$.
 An entry assigned the value $L$ indicates that we are choosing an arbitrary line bundle on this component (and therefore this gives a one dimensional choice). 

Two components of the Brill-Noether curve intersect when three of the entries that are completely determined (that is, not an $L$) in both are identical. 
For example, the component $C'_2$ intersects $C'_1$ at the point $P+3Q$, $C'_3$ at the point
  $2P+2Q$, and $C''_4$ at the point $4Q$. Similarly,  the component $C''_2$ intersects $C''_1$ at the point $P+3Q$, $C''_3$ at $4Q$, and $C'_4$ at the point $2P+2Q$. 
  The component $C'_4$ intersects $C'_5$ at the point $3P+Q$, $C'_3$ at the point $2P+2Q$, and $C''_2$ at the point $4P$.
   Finally, the component $C''_4$ intersects $C''_5$ at the point $3P+Q$, $C''_3$ at the point $4P$, and $C'_2$ at the point $2P+2Q$.
 The remaining components of the Brill-Noether curve intersect only neighboring components in the same chain.
 
 For each of the components with three points of intersection with the rest,
  we denote by $X_i'$ (resp. $X''_{i}$) the line bundle of intersection of the component $C'_i$ (resp. $C''_{i}$) with the shortest tail of components of the same sort; 
  denote by $Y'_i$ (resp. $Y''_{i}$) the intersection with the longest tail,  and $Z_i'$ (resp. $Z_{i}''$) the intersection with the component $C_{6-i}''$ (resp. $C_{6-i}'$).
  Then $2X'_i=Y'_i+Z'_i$ and $2X''_i=Y''_i+Z''_i$. 
  As $P_i$ and $Q_i$ were generic on each component $C_i$ of the given curve, also $Y_i$ and $Z_i$ are generic on each $C_i', C_i'',$ $i=2,4$. 
  Similarly, the points of intersection of $C_i', C_i'',$ $i=1, 3, 5 $ with the remaining components are generic too.

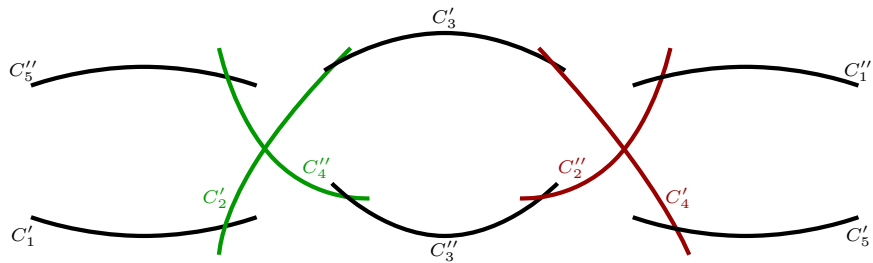
\begin{figure}
\begin{tikzpicture}[scale=1]
\node [color=black]  at  (-7.1,2.2) {\tiny$ C''_5$};
\node [color=green!60!black]  at  (-3.2,.9) {\tiny$ C''_4$};
\node [color=black]  at  (-1.5,-.2) {\tiny$ C''_3$};
\node [color=red!60!black]  at  (.2,.9) {\tiny$ C''_2$};
\node [color=black]  at  (4,2.2) {\tiny$ C''_1$};
\draw [black,  ultra thick] plot [smooth, tension=1] coordinates { (-7,2) (-5.5,2.25) (-4,2)};
\draw [green!60!black,  ultra thick] plot [smooth, tension=1] coordinates { (-4.5,2.5) (-3.75,1) (-2.5,0.5)};
\draw [black,  ultra thick] plot [smooth, tension=1] coordinates { (0,.7) (-1.5,0) (-3,.7)};
\draw [red!60!black,  ultra thick] plot [smooth, tension=1] coordinates { (-.5,0.5) (.75,1) (1.5,2.5)};
\draw [black,  ultra thick] plot [smooth, tension=1] coordinates { (1,2) (2.5,2.25) (4,2)};

\node [color=black]  at  (-7.1,0) {\tiny$ C'_1$};
\node [color=green!60!black]  at  (-4.55,.5) {\tiny$ C'_2$};
\node [color=black]  at  (-1.5,2.9) {\tiny$ C'_3$};
\node [color=red!60!black]  at  (1.6,.5) {\tiny$ C'_4$};
\node [color=black]  at  (4,0) {\tiny$ C'_5$};
\draw [black,  ultra thick] plot [smooth, tension=1] coordinates { (-7,0.25) (-5.5,0) (-4,0.25)};
\draw [green!60!black,  ultra thick] plot [smooth, tension=1] coordinates { (-4.5,-0.25) (-4,1) (-2.75,2.5)};
\draw [black,  ultra thick] plot [smooth, tension=1] coordinates { (0.1,2.2) (-1.5,2.7) (-3.1,2.2)};
\draw [red!60!black,  ultra thick] plot [smooth, tension=1] coordinates { (1.75,-0.25) (1,1) (-0.25,2.5)};
\draw [black,  ultra thick] plot [smooth, tension=1] coordinates { (1,0.25) (2.5,0) (4,0.25)};



%
%
%

\end{tikzpicture}
\caption{The Brill-Noether curve $W_4^1(C_0)$.}
\label{fig:x+3}
\end{figure}

  Consider now the circuit consisting of the curves $C_2', C_3', C_4', C_2'', C_3'', C_4''$. 
  For ease of notation, we relabel the curves as $D_1,\dots, D_6$. Note that $D_i$ is glued to  $D_{i-1}$ and $D_{i+1} $ by identifying $A_i\in D_i$ with $B_{i-1}\in D_{i-1}$,
  $i=1,\dots, 6$, where the indices are understood modulo 6. Moreover, the pair of points $A_i, B_i$ is generic on each component $D_i$

%
%
%
%
%
%
%

    Denote by $D$ the curve that contains the circuit and possibly a few rational tails and gives rise to an admissible cover $\pi: D\to R$.

  \begin{claim}\ 
  \begin{enumerate}
  \item There are no admissible covers of degree three of the circuit to a rational curve.
   \item An admissible cover of degree 4 of the circuit onto a rational curve $R$ is ramified at at least 2 nodes and every elliptic component $D_i$ of $D$
   maps with degree  two on its image.
    \item  An admissible cover of degree 5 of the circuit onto a rational curve $R$ is ramified at at least 1 node.   
  \end{enumerate}
  \end{claim}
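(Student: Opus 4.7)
The plan is to analyze all three parts via the closed walk induced by $\pi$ on the dual graph of $R$. For any admissible cover $\pi: D \to R$, let $R_{j(i)}$ be the component of $R$ containing $\pi(D_i)$ and set $d_i = \deg(\pi|_{D_i})$. Because $D_i$ is elliptic and $R_{j(i)}$ is rational, $d_i \geq 2$; because nodes of $D$ lie above nodes of $R$, the sequence $R_{j(1)}, R_{j(2)}, \dots, R_{j(6)}, R_{j(1)}$ is a cyclic walk of length six in the tree dual to $R$ with no consecutive repeats. Summing degrees at each $R_j$ gives $\sum_{D_i \mapsto R_j} d_i \leq n = \deg\pi$, so at most $\lfloor n/2 \rfloor$ circuit components map to a given $R_j$. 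The core combinatorial input is that a closed walk of length six in a tree with no consecutive repeats visits at most $v \leq 4$ distinct vertices: each traversed edge is used an even number of times, and the decomposition $6 = 2+2+2$ uses at most three distinct edges. Part (1) is then immediate, since $n = 3$ forces each $R_j$ to receive at most one $D_i$, so the walk would have to visit six distinct vertices, contradicting $v \leq 4$.

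For part (2) the bound gives $v \geq 3$. I would rule out $v = 3$ by noting that in the three-vertex path subtree the middle vertex is visited three times (each of its two incident edges is used twice), exceeding the pair bound of two; the star at $v = 4$ is excluded the same way. The only survivor is the path $R_0$-$R_1$-$R_2$-$R_3$ with walk $R_0, R_1, R_2, R_3, R_2, R_1$ and visit-distribution $(1, 2, 2, 1)$. Since a rational tail of degree $1$ is always unstable (a $\mathbb{P}^1$ of degree $1$ has no ramification and at most two attachment nodes, hence at most two special points), the pairs $(D_2, D_6)$ and $(D_3, D_5)$ must satisfy $d_2 + d_6 = d_3 + d_5 = 4$, forcing $d_2 = d_3 = d_5 = d_6 = 2$. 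At the interior node of $R$ between $R_1$ and $R_2$, the preimages $B_2 \in D_2$ and $A_6 \in D_6$ on the $R_1$-side have ramifications summing to $n = 4$, each at most $2$, so both equal $2$; by admissibility the cycle node $n_2 = B_2 = A_3$ is ramified with index $2$, and symmetrically so is $n_5$. For the leaf singletons $D_1, D_4$, the instability of degree-$1$ tails gives $d_1, d_4 \in \{2, 4\}$; the case $d_1 = 4$ forces $e_{A_2} = e_{B_2} = 2$ on $D_2$ by matching on the $R_0$-side, whence $\mathcal{O}_{D_2}(2A_2) \cong \mathcal{O}_{D_2}(2B_2)$ would make $A_2 - B_2$ a nontrivial $2$-torsion point on the elliptic $D_2$, contradicting the generic position of the marked points. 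Hence $d_1 = d_4 = 2$.

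For part (3) the same reduction yields the walk $R_0, R_1, R_2, R_3, R_2, R_1$, and instability of degree-$1$ tails now forces each pair sum to $5$, so $\{d_2, d_6\} = \{d_3, d_5\} = \{2, 3\}$. At the node of $R$ between $R_0$ and $R_1$, the $R_1$-side equation $e_{A_2} + e_{B_6} = 5$ together with $e_{A_2} \leq d_2$ and $e_{B_6} \leq d_6$ pins the indices at their maxima, and a short case-check over the only admissible-and-stable tail refinements (a degree-$2$ rational curve attached at smooth points of one or both of $D_2, D_6$ and mapping to $R_0$) shows that at least one of $e_{A_2}, e_{B_6}$ remains $\geq 2$, ramifying the corresponding cycle node. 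The main technical obstacle across parts (2) and (3) is the careful classification of rational tails that satisfy both admissibility and stability; once one establishes that every relevant tail must be a degree-$2$ rational curve attached at two elliptic components of the circuit, the stated ramification and degree conclusions follow from direct matching computations on the ramification indices.
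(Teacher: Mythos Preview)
Your approach via the closed walk that the circuit induces on the dual tree of $R$ is genuinely different from the paper's argument and is in many ways more transparent. The paper proves all three parts with a single informal degree count: over any component of $R$, each elliptic $D_i$ that is unramified at its two circuit nodes contributes two ``sheets,'' consecutive components share a sheet, and each ramified circuit node kills one sheet, yielding $n \geq 12 - 6 - \delta = 6 - \delta$. This immediately gives $\delta \geq 3,2,1$ for $n = 3,4,5$; the paper then finishes part~(1) by observing that $\delta = 3$ forces all $d_i = 2$ and a ramification on every component, which is incompatible with an elliptic component sitting over a leaf of $R$. Your walk analysis recovers the same information but in a sharper structural form: for $n \leq 5$ you pin down the walk as $R_0,R_1,R_2,R_3,R_2,R_1$ on a four-vertex path, and from there read off degrees and ramification directly. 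Parts~(1) and~(2) are correct as you wrote them (your parenthetical ``each of its two incident edges is used twice'' in the $v=3$ case is not literally true---the split is $(2,4)$---but the middle vertex is still visited three times, so the conclusion stands).

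There is, however, a real gap in your argument for part~(3). You analyze the node of $R$ between the leaf $R_0$ and $R_1$ and claim that the only admissible stable tail over $R_0$ is a single degree-$2$ rational curve. This is not so: with $d_1 = 2$ one can have a single degree-$3$ rational tail $T$ over $R_0$, unramified over the $R_0$--$R_1$ node, attached at three smooth points (one on $D_2$, two on $D_6$, say). Such a $T$ has three nodes and, by Riemann--Hurwitz, at least two smooth ramification points, so it is stable; and in this configuration $e_{A_2} = e_{B_6} = 1$, so neither circuit node over $R_0$--$R_1$ is ramified. The fix is simply to look at the \emph{interior} node of $R$, between $R_1$ and $R_2$, exactly as you did in part~(2). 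Since $d_2 + d_6 = 5$ and $d_3 + d_5 = 5$ already exhaust the degree, there are no rational tails over $R_1$ or $R_2$; hence the only preimages on the $R_1$-side are $B_2 \in D_2$ and $A_6 \in D_6$, forcing $e_{B_2} = d_2 \geq 2$ and $e_{A_6} = d_6 \geq 2$. Both circuit nodes there are ramified, which is more than enough for part~(3).
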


  \begin{proof}[Proof of the claim] A map of an elliptic curve to a rational curve has degree at least two. Assume that $D_i$ maps to a rational component $R_i$ of $R$ 
  and that $P$ is a node of $R$ lying on $R_i$ at which the map $D_i\stackrel{\pi}{\to} R_i$ is not ramified. Let $R_j$ be the other component of $R$ gluing with $R_i$ at $P$.
  There are two (or more if the map of $D_i$ to $R_i$ has degree greater than two) points of $D_i$ mapping to $P$,
  therefore, there are (at least) two branches of $D$ gluing with these two points. As $R$ is connected, every component of $R$ can be joined to the component $R_i$
  with a tree. Therefore, every elliptic component not ramified over any node contributes two units to the degree of the map over every rational component $R_k$ of $R$.
  Because every elliptic curve is joined to two more elliptic curves, six of these contributions are double counted. So the degree of the map is at least 
 $$6\times 2-6-\delta =6-\delta,$$
  where $\delta$ is the number of nodes over which the map is ramified and equality would imply that all the maps $D_i\to R_i$ have degree two.
  
  If the admissible cover has degree at most three, then $6-\delta\le 3$.
  By the generality of the pair of points on a given elliptic component, there cannot be two ramification points on a single component. 
   As there are six components and  each node is common to two components,  there cannot be more than 3 ramification nodes for the map. 
  Hence, if the degree of the admissible covering is at most three, 
  the degree of the map of each elliptic component over the corresponding $R_i$ is two and $\delta$ is three. 
  Up to contracting a few rational components that glue to the rest at only one point in both $D$ and $R$, we can assume that there is one elliptic component
  $D_i$ mapping to a rational component $R_i$ that intersects the rest of $R$ at only one point. Then, both nodes of $D_i$ map to the same point on $R$. 
  As the map $D_i\to R_i$ is of degree two, it cannot be ramified at these points and we have a contradiction with the assumption that every component has a ramification node.
  This shows that there are no admissible covers of degree three.
  
 The proof that  the admissible cover of degree four and five have  $\delta\ge 2$  and $\delta\ge 1$ follows similarly.
  \end{proof}
  
  Assume now that we have an admissible cover of degree at most five on the whole Brill-Noether curve.
   Restricting to the circuit and contracting suitable rational components attached at a single point,  we obtain an admissible cover of the circuit onto a rational curve. 
   
  Assume that the degree of the admissible cover restricted to the circuit is four. Then, there are at least two ramification points on the circuit. 
 Consider now one of the tail components of the Brill- Noether curve (a curve not on the circuit). It is attached to the curve say $C_i'$ (or $C''_i$) 
 in the circuit at one point  $X_i\in C'_i$
  where $X_i$ satisfies  $2X_i=Y_i+Z_i$ and $Y_i, Z_i$ are the points of intersection with the rest of the circuit and are generic points on $C'_i$.
  As the admissible cover is ramified at one of the nodes in the circuit, the map of the elliptic curve to the rational curve 
   is given by the linear series either $|2Y_i|$ or $|2Z_i|$. As $2X_i\not\equiv 2Y_i$ and $2X_i\not\equiv 2Z_i$,
    the map cannot be ramified at $X_i$. Hence, the addition of that component adds degree one to the admissible cover of the circuit.
  As we have at least two such components, this would bring the degree from 4 to 6 contradicting the assumption that the degree of the admissible covering is  at most five.
  
    If the admissible cover restricted to the circuit has degree 5, a similar argument using the one ramification point in the circuit applies.
    
    \begin{rem} It is not hard to construct an admissible cover of degree 6 of the Brill-Noether curve onto a rational curve: consider two rational non-singular curves $R_1, R_2$
    and glue them by identifying $Q_1\in R_1$  with $P_2\in R_2$. Map $ C_2', C_4', C_3''$ to $R_1$ and  $ C_3', C_4'', C_1''$ to $R_2$. 
 In each case,    use the complete linear series $|Y+Z|$ determined by the two points that join these curves to each other (for simplicity of notation, we omit subindices and superindices). 
 We can adjust the maps so that the  nodes on each curve in the circuit map to the 
    node in the rational base. Note then, that in the curves  $ C_2', C_4',  C_4'', C_2'',$ the additional node $X$ satisfies $|2X|=|Y+Z|$. So $X$ is a ramification point of the
    map to the corresponding rational curve.
    Add 4 rational components $R'_1, R'_5, R''_1, R''_5$ at the images of $X_2', X_4', X_2'', X_4''$ and map $C'_1, C'_5, C''_1, C''_5$ respectively to these curves 
    using the linear series given by twice the node at these four curves. 
     Add then 16 more rational components to the Brill Noether curve at the remaining 16 points lying over $R_1'\cap R_1, R_5'\cap R_1, R_1''\cap R_2, R_5''\cap R_2$, 
     the four new nodes of the base. These 16 rational components map bijectively in groups of four over   $R'_1, R'_5, R''_1, R''_5$.
     One obtains an admissible cover of degree 6.
    \end{rem}

    \begin{figure}
\begin{tikzpicture}[scale=.75]


\node [color=black]  at  (1.5,7.7) {\footnotesize$ C'_3$};
\draw [black,  ultra thick, ] plot [smooth, tension=3] coordinates { (0,7) (5,7.5) (-0.2,6)};
\node [color=black]  at  (-4,5.3) {\footnotesize$ C'_4$};
\draw [black,  ultra thick, ] plot [smooth, tension=3] coordinates { (0,6) (-6,6.5) (0.2,5)};
\node [color=black]  at  (3.3,4.6) {\footnotesize$ C''_2$};
\draw [black,  ultra thick, ] plot [smooth, tension=3] coordinates { (0,5) (4,5.5) (-0.2,4)};
\node [color=black]  at  (-1.4,2.9) {\footnotesize$ C''_3$};
\draw [black,  ultra thick, ] plot [smooth, tension=3] coordinates { (0,4) (-6,4.5) (0.2,3)};
\node [color=black]  at  (5.2,3.1) {\footnotesize$ C''_4$};
\draw [black,  ultra thick, ] plot [smooth, tension=3] coordinates { (-.1,2.9) (5,3.5) (-0.3,1.9)};
\node [color=black]  at  (-3,9.1) {\footnotesize$ C'_2$};
\draw [black,  ultra thick, ] plot [smooth, tension=4] coordinates { (0.2,2) (-7,6.5) (0.2,7)};

\node [color=black]  at  (5,6.6) {\footnotesize$ C''_1$};
\draw [black,  ultra thick, ] plot [smooth, tension=3] coordinates { (5.5,6.3) (4,5.5) (5.5,5.5)};
\node [color=black]  at  (1.5,1.2) {\footnotesize$ C''_5$};
\draw [black,  ultra thick, ] plot [smooth, tension=3] coordinates { (1.5,1.5) (2,2.3) (2.5,1.85)};
\node [color=black]  at  (-3,7.3) {\footnotesize$ C'_5$};
\draw [black,  ultra thick, ] plot [smooth, tension=3] coordinates { (-2.5,7.5) (-2,6.4) (-1.3,7.4)};
\node [color=black]  at  (-6,9.2) {\footnotesize$ C'_1$};
\draw [black,  ultra thick, ] plot [smooth, tension=3] coordinates { (-5.5,9.8) (-5,9.15) (-4.5,9.8)};

\node [color=yellow!80!black] at (-5,9.1) {$\bullet$};
\node [color=green!75!black] at (-2,6.4) {$\bullet$};
\node [color=blue!80!white] at (4,5.5) {$\bullet$};
\node [color=orange!65!white] at (2,2.3) {$\bullet$};


\node [color=black]  at  (-6.2,-2.2) {\tiny$ R_2$};
\node [color=black]  at  (6.2,-2.2) {\tiny$ R_1$};
\draw [black,  ultra thick] plot [smooth, tension=1] coordinates { (-6,-2) (.5,-1)};
\draw [black,  ultra thick] plot [smooth, tension=1] coordinates { (6,-2) (-.5,-1)};

\node [color=black]  at  (-5.2,-3) {\tiny$ R'_1$};
\draw [black,  ultra thick] plot [smooth, tension=1] coordinates { (-5.2,-1.25) (-4.6,-3.15)};

\draw [black,  ultra thick] plot [smooth, tension=1] coordinates { (-5.3,5.3) (-4.7,6.45)};

\draw [black,  ultra thick] plot [smooth, tension=1] coordinates { (-5.3,6.2) (-4.7,7.45)};

\draw [black,  ultra thick] plot [smooth, tension=1] coordinates { (-5.3,4.2) (-4.7,5.3)};

\draw [black,  ultra thick] plot [smooth, tension=1] coordinates { (-5.3,3.4) (-4.7,4.5)};
\node [color=yellow!80!black] at (-5,6.8) {\tiny$\bullet$};
\node [color=yellow!80!black] at (-5,5.95) {\tiny$\bullet$};
\node [color=yellow!80!black] at (-5,4.75) {\tiny$\bullet$};
\node [color=yellow!80!black] at (-5,3.9) {\tiny$\bullet$};
\node [color=yellow!80!black] at (-5,-1.85) {$\bullet$};
\node [color=black]  at  (-2,-2.5) {\tiny$ R'_5$};
\draw [black,  ultra thick] plot [smooth, tension=1] coordinates { (-2.2,-.5) (-1.6,-2.4)};
\draw [black,  ultra thick] plot [smooth, tension=1] coordinates { (-2.2,8.8) (-1.6,7.5)};
\draw [black,  ultra thick] plot [smooth, tension=1] coordinates { (-2.2,4.8) (-1.6,3.5)};
\draw [black,  ultra thick] plot [smooth, tension=1] coordinates { (-2.2,3.8) (-1.6,2.5)};
\draw [black,  ultra thick] plot [smooth, tension=1] coordinates { (-2.2,2.4) (-1.6,1.3)};
\node [color=green!75!black] at (-2,8.33) {\tiny$\bullet$};
\node [color=green!75!black] at (-2,4.38) {\tiny$\bullet$};
\node [color=green!75!black] at (-2,3.3) {\tiny$\bullet$};
\node [color=green!75!black] at (-2,1.95) {\tiny$\bullet$};
\node [color=green!75!black] at (-1.9,-1.4) {$\bullet$};

\node [color=black]  at  (2.75,-2.1) {\tiny$ R''_5$};
\draw [black,  ultra thick] plot [smooth, tension=1] coordinates { (2.15,-.8) (1.6,-2.75)};
\draw [black,  ultra thick] plot [smooth, tension=1] coordinates { (2.3,8) (1.63,6.7)};
\draw [black,  ultra thick] plot [smooth, tension=1] coordinates { (2.3,7) (1.63,5.7)};
\draw [black,  ultra thick] plot [smooth, tension=1] coordinates { (2.3,6.1) (1.63,4.85)};
\draw [black,  ultra thick] plot [smooth, tension=1] coordinates { (2.3,5.1) (1.7,3.85)};
\node [color=orange!65!white] at (2,7.45) {\tiny$\bullet$};
\node [color=orange!65!white] at (2,4.45) {\tiny$\bullet$};
\node [color=orange!65!white] at (2,5.54) {\tiny$\bullet$};
\node [color=orange!65!white] at (2,6.35) {\tiny$\bullet$};
\node [color=orange!65!white] at (2,-1.4) {$\bullet$};

\node [color=black]  at  (4.6,-2.8) {\tiny$ R''_1$};
\draw [black,  ultra thick] plot [smooth, tension=1] coordinates { (3.8,-.6) (4.3,-2.9)};
\draw [black,  ultra thick] plot [smooth, tension=1] coordinates { (3.7,8.5) (4.2,7.3)};
\draw [black,  ultra thick] plot [smooth, tension=1] coordinates { (3.8,7.5) (4.2,6.3)};
\draw [black,  ultra thick] plot [smooth, tension=1] coordinates { (3.8,4.2) (4.2,3.3)};
\draw [black,  ultra thick] plot [smooth, tension=1] coordinates { (3.8,3.3) (4.2,2.3)};
\node [color=blue!80!white] at (4,7.75) {\tiny$\bullet$};
\node [color=blue!80!white] at (4,6.87) {\tiny$\bullet$};
\node [color=blue!80!white] at (4,3.75) {\tiny$\bullet$};
\node [color=blue!80!white] at (4,2.85) {\tiny$\bullet$};
\node [color=blue!80!white] at (4.05,-1.7) {$\bullet$};

\node [color=black] at (0,-1.1) {$\bullet$};
\draw [thick,->] (0,1.5) -- (0,-.25);
\end{tikzpicture}
\caption{Admissible cover of degree 6}
\end{figure}
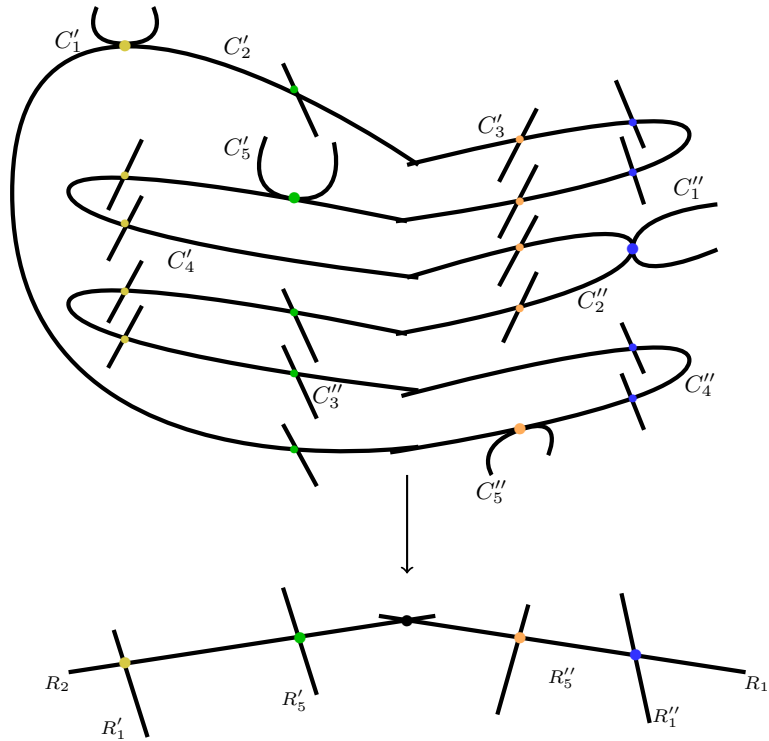

    \begin{rem}
     
     It is also easy to obtain an unramified double cover of the Brill-Noether curve to a curve of genus 5 (in lieu of the plane quintic): one should consider the curve obtained by attaching $C_2, C_3, C_4$ to each other at the points $Y_i, Z_i$ forming a circuit. 
      Attach then $C_1$ to $C_2$ and $C_5$ to $C_4$ using the point $X$. Then each $C'_i, C''_i$ can be mapped bijectively to the corresponding $C_i$
      giving rise to the double cover.
    \end{rem}
 
\begin{subsection}*{Acknowledgements} The second author would like to  express his gratitude to the Max-Planck Institut f\"ur Mathematik in Bonn,
 for the wonderful working conditions and stimulating environment. All three authors would like to thank Gavril Farkas for referring them to their work \cite{AF}.
\end{subsection}

\def\cprime{$'$}

\end{document}